\newtheorem{Teo}{Theorem}[section]
\newtheorem{Prop}[Teo]{Proposition}
\newtheorem{Lema}[Teo]{Lemma}
\theoremstyle{definition}
\newtheorem{Def}[Teo]{Definition}
\newcommand{\Q}{\mathbb{Q}}
\newcommand{\R}{\mathbb{R}}
\newcommand{\N}{\mathbb{N}}
\newcommand{\lra}{\longrightarrow}
\newcommand{\CH}{\mbox{\rm conv}}
\begin{document}
\title{Perron transforms and Hironaka's game}
\author{Michael de Moraes}
\author{Josnei Novacoski}
\thanks{During the realization of this project the second author was supported by a grant from Funda\c c\~ao de Amparo \`a Pesquisa do Estado de S\~ao Paulo (process number 2017/17835-9).}

\begin{abstract}
In this paper we present a matricial result that generalizes Hironaka's game and Perron transforms simultaneously. We also show how one can deduce the various forms in which the algorithm of Perron appears in proofs of local uniformization from our main result.
\end{abstract}

\keywords{Valuations, Local uniformization, Algorithm of Perron, Hironaka's game}
\subjclass[2010]{Primary 13A18}
\maketitle

\section{Introduction}

The algorithm of Perron appears as an important tool in various proofs of local uniformization for valuations centered on algebraic varieties. For instance, in \cite{Zar1}, Zariski applies this algorithm in the proof of the Local Uniformization Theorem for places of algebraic function fields over base fields of characteristic $0$. Then, in \cite{Zar2}, he uses this theorem to prove resolution of
singularities for algebraic surfaces (in characteristic $0$). The local uniformization problem over base fields of any characteristic is still open. In \cite{Kuhl}, Knaf and Kuhlmann use a similar algorithm in the proof that Abhyankar places admit local uniformization in any characteristic. Also, in \cite{Cut}, Cutkosky and Mourtada use a version of Perron transforms in the proof that reduction of the multiplicity
of a characteristic $p>0$ hypersurface singularity along a valuation is possible if there is a finite linear projection which is defectless.

The Hironaka's game was proposed by Hironaka in \cite{Hirjog} and \cite{Hirjog1}. This game encodes the combinatorial part of the resolution a given singularity. Different winning strategies for this game allow different resolutions for that singularity. The existence of a winning strategy for Hironaka's game was first proved by Spivakovsky in \cite{Spiv}. An alternative solution was presented in \cite{Zei}. In \cite{Hauser}, Hauser presents a detailed relation between Hironaka's game and its applications on resolution of singularities.

The main goal of this paper is to explicitly relate Perron transforms and Hironaka's game. Our main result (Theorem \ref{mainresult}), which is given in terms of matrices with non-negative integer entries, implies the existence of a winning strategy for the Hironaka's game and also the existence of Perron transforms with some required properties.

This paper is divided as follows. In Section 2, we present and prove our main theorem. In Section 3, we present and prove Lemma 4.2 of \cite{Kuhl} (Theorem \ref{TeotransfPerrKK} below). Knaf and Kuhlmann use this result as an important step to prove that every Abhyankar valuation admits local uniformization. A proof of Theorem \ref{TeotransfPerrKK} can be found in \cite{Elliot}, but we show here that it follows easily from Theorem 2.1. In Section 4, we present and prove Lemma 4.1 of \cite{Cut} (Theorem \ref{TPcutmoutasa} below). In \cite{Cut}, the authors refer to a proof of it in \cite{Cut0}. That proof is based on the original algorithm of Zariski to prove local uniformization. Again, we show that Theorem \ref{TPcutmoutasa} follows from Theorem \ref{mainresult}. In section 5, we present the Hironaka's game (also known as Hironaka's polyhedra game), and deduce from Theorem 2.1 that it admits a wining strategy (Theorem \ref{mainresultsec5}).

\section{Main theorem}
 
Let $J$ be a subset of $\{1,\ldots,n\}$ and $j\in J$. We define the $n\times n$ matrix 
\[
A_{J,j}=(a_{rs})_{1\leq r,s\leq n}
\]
by
\begin{displaymath}
a_{rs}=\left\{ \begin{array}{ll}
1 &\mbox{ if }r=s\mbox{ or if }r=j\mbox{ and }s\in J\\
0&\mbox{ otherwise }\\
\end{array}.\right.
\end{displaymath}
Notice that $\det{A_{J,j}}=1$ and if we think of $A_{J,j}$ as a mapping from $\N^n$ to $\N^n$ we have
\[
A_{J,j}(\alpha_1,\ldots,\alpha_n)=(\alpha_1,\ldots,\alpha_{j-1},\sum_{i\in J}\alpha_i,\alpha_{j+1}\ldots,\alpha_n).
\]

The main result of this paper is the following:
\begin{Teo}\label{mainresult}
Let $\alpha,\beta\in\N^n$. Then there exist $l\in\mathbb{N}$, subsets $J_1,\ldots,J_l\subseteq\{1,\ldots,n\}$ and $j_1,\ldots,j_l$ such that for every $k$, $1\leq k\leq l$, $J_k$ is chosen in function of the set
\[\{\alpha,\beta,J_1,\ldots,J_{k-1},j_1,\ldots, j_{k-1}\},
\]
and $j_k$ is randomly assigned in $J_k$ such that
\[
A\alpha\leq A\beta\mbox{ or }A\beta\leq A\alpha
\]
componentwise, where
\[
A=A_{J_l,j_l}\ldots A_{J_1,j_1}.
\]
\end{Teo}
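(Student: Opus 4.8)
The statement is really a game: at each step we get to *choose* a subset $J_k$ (knowing everything so far) but an adversary *chooses* $j_k\in J_k$; we must force, after finitely many steps, that the transformed vectors $A\alpha$ and $A\beta$ are comparable componentwise. The natural approach is to find a monovariant — a non-negative integer quantity attached to the pair $(\alpha,\beta)$ that strictly decreases under a well-chosen move regardless of the adversary's response, and that equals (say) $0$ exactly when $\alpha,\beta$ are already comparable. Then induction on the value of the monovariant finishes the proof.

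The plan is to pass to the single vector $\gamma:=\alpha-\beta\in\Z^n$. Since each $A_{J,j}$ is linear, $A\gamma=A\alpha-A\beta$, so ``$A\alpha\leq A\beta$ or $A\beta\leq A\alpha$'' is equivalent to ``$A\gamma\leq 0$ or $A\gamma\geq 0$'', i.e.\ to $A\gamma$ being \emph{sign-coherent}. Thus the task becomes: produce a product $A=A_{J_l,j_l}\cdots A_{J_1,j_1}$, with each $J_k$ prescribed from the data seen so far and each $j_k\in J_k$ adversarial, that turns $\gamma$ into a sign-coherent vector. For $\delta\in\Z^n$ set $P(\delta):=\{i:\delta_i>0\}$, $N(\delta):=\{i:\delta_i<0\}$, $u(\delta):=\sum_{i\in P(\delta)}\delta_i$ and $v(\delta):=\sum_{i\in N(\delta)}(-\delta_i)$; then $\delta$ is sign-coherent exactly when $\min\bigl(u(\delta),v(\delta)\bigr)=0$. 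I will arrange each move so that the pair $\bigl(\min(u,v),\,\max(u,v)\bigr)\in\N^2$ strictly decreases in the lexicographic order. As $(\N^2,\leq_{\mathrm{lex}})$ is well-ordered, after finitely many moves (say $l$ of them; $l=0$ and $A=\mathrm{Id}$ if $\gamma$ is already sign-coherent) no move of the prescribed type remains, and that happens precisely when $\gamma$ has become sign-coherent.

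The move, for a current vector $\delta$ that is not sign-coherent. Using the symmetry of the conclusion in $\alpha,\beta$ we may assume $u(\delta)\geq v(\delta)$ (otherwise swap the roles of $P(\delta)$ and $N(\delta)$ throughout). Choose $S\subseteq P(\delta)$ minimal with respect to inclusion subject to $\sum_{i\in S}\delta_i\geq v(\delta)$ --- fixing some rule (e.g.\ the colexicographically least such $S$) to make the choice a function of $\delta$; such an $S$ exists since $S=P(\delta)$ qualifies, and $S\neq\varnothing$ since $v(\delta)>0$. Put $J:=S\cup N(\delta)$, a disjoint union, and $s:=\sum_{i\in J}\delta_i=\bigl(\sum_{i\in S}\delta_i\bigr)-v(\delta)\geq 0$. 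After the adversary picks $j\in J$, the matrix $A_{J,j}$ replaces $\delta$ by the vector agreeing with $\delta$ except that its $j$-th entry is now $s$.

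The descent. If $j\in N(\delta)$, the $j$-th entry goes from negative to $s\geq 0$, so $v$ drops by $|\delta_j|\geq 1$ while $u$ does not decrease; then $u'\geq u\geq v>v'$ gives $\min(u',v')=v'<\min(u,v)$, so the pair drops. If $j\in S$, minimality of $S$ gives $\sum_{i\in S\setminus\{j\}}\delta_i<v(\delta)$, so the $j$-th entry drops from $\delta_j=\bigl(\sum_{i\in S}\delta_i\bigr)-\sum_{i\in S\setminus\{j\}}\delta_i$ to $s=\bigl(\sum_{i\in S}\delta_i\bigr)-v(\delta)$, a value that is $\geq 0$ and strictly below $\delta_j$; hence $v$ is unchanged and $u$ strictly decreases, so either $\min(u,v)$ remains $v$ and $\max$ drops, or $u$ falls below $v$ and $\min$ drops --- either way the pair decreases. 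Finally, $A\gamma$ sign-coherent is exactly the assertion, and each $J_k$ is a function of the current transformed vector, hence of $\{\alpha,\beta,J_1,\ldots,J_{k-1},j_1,\ldots,j_{k-1}\}$.

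The only delicate point is the design of $J$: choosing $S$ with \emph{just enough} positive mass to absorb $v(\delta)$ forces $s\geq 0$, which is what stops the adversary's move inside $N(\delta)$ from producing new negative coordinates (so the total negative mass genuinely decreases), while inclusion-minimality of $S$ is what stops the adversary's move inside $S$ from blowing up a positive coordinate. I expect no real obstacle past calibrating this balance; with the move set up as above the lexicographic descent, and hence termination, is immediate.
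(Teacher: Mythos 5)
Your proposal is correct and is essentially the paper's own argument in different clothing: your pair $\bigl(\min(u,v),\max(u,v)\bigr)$ for $\delta=\alpha-\beta$ is exactly the paper's $\tau(\alpha,\beta)$ (with $u=|\overline\alpha|$, $v=|\overline\beta|$), your set $J=S\cup N(\delta)$ with $S$ inclusion-minimal of mass at least $v$ matches the paper's choice of the full support of the smaller part together with a minimal descending prefix of the larger part, and the two-case descent analysis coincides. No gaps; the argument is sound.
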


Given $\alpha=(a_1,\ldots,a_n),\beta=(b_1,\ldots,b_n)\in \N^n$, we define $\tau(\alpha,\beta)\in\mathbb{N}\times\mathbb{N}$ in the following way: set
\[
\gamma_{\alpha\beta}=(c_1,\ldots,c_n)\mbox{ where }c_i:=\min\{a_i,b_i\},
\]
and denote $\overline\alpha:=\alpha-\gamma_{\alpha\beta}$ and $\overline\beta:=\beta-\gamma_{\alpha\beta}$. Then
\[
\tau(\alpha,\beta)=(\min\{|\overline\alpha|,|\overline{\beta}|\},\max\{|\overline\alpha|,|\overline{\beta}|\})\in\mathbb{N}\times\mathbb{N},
\]
where $|$ $|$ denotes the sum norm. Observe that $\alpha\leq\beta$ or $\beta\leq\alpha$ componentwise if, and only if, the first coordinate of $\tau(\alpha,\beta)$ is $0$.

\begin{Prop}\label{mainprop}
Let $\alpha,\beta\in \N^n$ such that $0<|\overline\alpha|$ and $0<|\overline\beta|$, where $\overline{\alpha}=\alpha-\gamma_{\alpha\beta}$ and $\overline{\beta}=\beta-\gamma_{\alpha\beta}$. Then there exists $J\subseteq \{1,\ldots,n\}$ such that, for every $j\in J$, we have
\[
\tau(A_{J,j}\alpha,A_{J,j}\beta)<_{lex}\tau(\alpha,\beta),
\]
where $<_{lex}$ denotes the lexicographic order.
\end{Prop}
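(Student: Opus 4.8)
The plan is to reduce the statement to one about the single integer vector $\delta:=\alpha-\beta\in\Z^n$. Since $\overline\alpha$ and $\overline\beta$ have disjoint supports and $\overline\alpha-\overline\beta=\alpha-\beta=\delta$, the $i$-th entry of $\overline\alpha$ equals $\max\{\delta_i,0\}$ and that of $\overline\beta$ equals $\max\{-\delta_i,0\}$; hence, with $S^+:=\{i:\delta_i>0\}$ and $S^-:=\{i:\delta_i<0\}$, we have $|\overline\alpha|=\sum_{i\in S^+}\delta_i$ and $|\overline\beta|=\sum_{i\in S^-}(-\delta_i)$. As the whole statement is symmetric in $\alpha$ and $\beta$, we may assume $P:=|\overline\alpha|\le|\overline\beta|=:N$; by hypothesis $0<P\le N$, both $S^+$ and $S^-$ are non-empty, and $\tau(\alpha,\beta)=(P,N)$. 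Since $A_{J,j}$ is a non-negative integer matrix, $A_{J,j}\alpha,A_{J,j}\beta\in\N^n$ and $A_{J,j}\alpha-A_{J,j}\beta=A_{J,j}\delta$; the vector $A_{J,j}\delta$ coincides with $\delta$ outside the $j$-th coordinate and has $\sigma:=\sum_{i\in J}\delta_i$ in the $j$-th coordinate. Applying the observation above to the pair $A_{J,j}\alpha,A_{J,j}\beta$, we get $\tau(A_{J,j}\alpha,A_{J,j}\beta)=(\min\{P',N'\},\max\{P',N'\})$, where $P'$ is the sum of the positive entries of $A_{J,j}\delta$ and $N'$ is the sum of the absolute values of its negative entries. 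Thus everything comes down to how replacing one entry of $\delta$ by $\sigma$ affects $P$ and $N$.

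I would then construct $J$ as follows. Fix any $p\in S^+$, so $\delta_p\le P\le N$. Among the non-empty subsets $I\subseteq S^-$ whose weight $\sum_{i\in I}(-\delta_i)$ is at least $\delta_p$ --- the set $S^-$ is such a subset --- choose one, $I_0$, of least weight $\Sigma:=\sum_{i\in I_0}(-\delta_i)$, and put $J:=\{p\}\cup I_0$. Then $\sigma=\delta_p-\Sigma\le0$, so $|\sigma|=\Sigma-\delta_p$. The crucial point is that $|\sigma|<-\delta_j$ for every $j\in I_0$: the set $I_0\setminus\{j\}$ has weight $\Sigma-(-\delta_j)<\Sigma$, so minimality of $\Sigma$ forces $I_0\setminus\{j\}$ to be empty or to have weight $<\delta_p$, and in either case $\Sigma-(-\delta_j)<\delta_p$, which is exactly $|\sigma|<-\delta_j$.

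Finally I would run the case analysis on $j\in J=\{p\}\cup I_0$ using the description of $A_{J,j}\delta$. If $j=p$, then $A_{J,j}\delta$ has $\sigma\le0$ in place of the positive entry $\delta_p$ and is unchanged elsewhere, so its positive entries are those of $\delta$ except $\delta_p$; hence $P'=P-\delta_p$, and $0\le P'<P=\min\{P,N\}$, so the first coordinate of $\tau$ strictly decreases and $\tau(A_{J,j}\alpha,A_{J,j}\beta)<_{lex}\tau(\alpha,\beta)$. If $j\in I_0$, then $A_{J,j}\delta$ has $\sigma\le0$ in place of the negative entry $\delta_j$ and is unchanged elsewhere; the positive entries are untouched, so $P'=P$, whereas on the negative side the contribution $-\delta_j$ is replaced by $|\sigma|$, so $N'=N-(-\delta_j)+|\sigma|$, which lies in $\N$ and, by the crucial inequality, satisfies $N'<N$. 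If $N'\ge P$ this gives $\tau(A_{J,j}\alpha,A_{J,j}\beta)=(P,N')<_{lex}(P,N)$, and if $N'<P$ the first coordinate drops to $N'<P$; in both cases the lexicographic order decreases. This completes the proof.

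The conceptual steps are the first two: reinterpreting the $\gamma_{\alpha\beta}$-reduction as ``take the positive and negative parts of $\delta=\alpha-\beta$'', and producing the set $J$. The main obstacle is that a single $J$ must decrease $\tau$ for every $j\in J$ at once; this is handled by taking $I_0$ of least weight above the threshold $\delta_p$, which forces $|\sigma|$ to be strictly smaller than every weight $-\delta_j$ with $j\in I_0$ and hence forces $N'<N$ whenever the random index lands on a negative coordinate.
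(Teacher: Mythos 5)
Your proof is correct. It follows the same overall strategy as the paper (pass to the positive/negative parts of $\delta=\alpha-\beta$, assume WLOG $|\overline\alpha|\le|\overline\beta|$, and pick $J$ so that $\sigma=\sum_{i\in J}\delta_i$ is non-positive but ``barely so'', which forces the lexicographic drop for every $j\in J$), but the construction of $J$ is genuinely different. The paper takes $J$ to contain \emph{all} of $S^+$ together with a greedy prefix of $S^-$: it sorts the negative weights $\overline b_i$ in descending order and stops at the first index $k$ where the cumulative weight reaches the threshold $|\overline\alpha|$; the key inequality $\overline{b'}_j<\overline b_j$ then follows from the sorting together with the minimality of $k$. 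You instead take a \emph{single} positive index $p$ plus a subset $I_0\subseteq S^-$ of minimal weight among those with weight at least $\delta_p$, and derive the analogous key inequality $|\sigma|<-\delta_j$ from minimality of the weight of $I_0$ rather than from an ordering of the entries. Both verifications of the two cases ($j$ positive, $j$ negative) are then essentially identical. Your variant buys a smaller set $J$ (only one positive index) and avoids the renumbering/sorting step, at the cost of selecting $I_0$ by a search over subsets rather than by a linear greedy scan; the paper's version is slightly more algorithmic. Either choice yields a valid answer to the requirement that the conclusion hold for \emph{every} $j\in J$, which is the real content of the proposition.
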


We proof now Theorem \ref{mainresult} using Proposition \ref{mainprop}, and we will prove Proposition \ref{mainprop} in the sequence.

\begin{proof}[Proof of Theorem \ref{mainresult} assuming Proposition \ref{mainprop}]
We set 
\[
\alpha^{(0)}:=\alpha\mbox{, }\beta^{(0)}:=\beta
\] 
and for $k\geq 1$, if $\alpha^{(k-1)}$, $\beta^{(k-1)}$, $J_k$ and $j_k$ have been defined, we set 
\[
\alpha^{(k)}:=A_{J_k,j_k}\alpha^{(k-1)}\mbox{, }\beta^{(k)}:=A_{J_k,j_k}\beta^{(k-1)}.
\]
We have to show that for some $l\in\mathbb{N}$, the first coordinate of $\tau(\alpha^{(l)},\beta^{(l)})$ is $0$, where we choose $J_k$, and $j_k$ is randomly assigned in $J_k$ for all $k\leq l$.

If the first coordinate of $\tau(\alpha,\beta)$ is $0$, then nothing needs to be done. Suppose that the first coordinate of $\tau(\alpha,\beta)$ is different than $0$. By Proposition \ref{mainprop}, there exists $J_1\subseteq\{1,\ldots,n\}$ such that for any $j_1\in J_1$ we have
\[
\tau(\alpha^{(1)},\beta^{(1)})<_{lex}\tau(\alpha,\beta).
\]
If $\tau(\alpha^{(1)},\beta^{(1)})$ has first coordinate $0$, it is done. If not, we apply proposition 1.2 again. Iterating this process, we produce a strictly descending sequence
\[
\tau(\alpha,\beta)>_{lex}\tau(\alpha^{(1)},\beta^{(1)})>_{lex}\tau(\alpha^{(2)},\beta^{(2)})>_{lex}\ldots.
\]
Since $\mathbb{N}\times\mathbb{N}$ is well ordered with respect to the lexicographic order, a strictly descending sequence must be finite. Then there is $l\in\mathbb{N}$ such that $\tau(\alpha^{(l)},\beta^{(l)})$ have the first coordinate equals to $0$. 
\end{proof}

Now we will prove Proposition \ref{mainprop}.

\begin{proof}[Proof of Proposition \ref{mainprop}]
Let $\alpha=(a_1,\ldots,a_n)$ and $\beta=(b_1,\ldots,b_n)$, and assume, without loss of generality, that $|\overline\alpha|\leq|\overline\beta|$. Since $|\overline\alpha|>0$ and $|\overline\beta|>0$, there exists $i,i'\in\{1,\ldots,n\}$ such that $a_i>b_i$ and $a_{i'}<b_{i'}$. Hence, renumbering the indexes, there exists $r\in \{1,\ldots,n-1\}$ such that $a_i>b_i$ for $i\leq r$ and $a_i\leq b_i$ for $i>r$. Then we have
\[
\overline\alpha=(\overline a_1,\ldots,\overline a_r,0,\ldots,0) \mbox{ and }\overline\beta=(0,\ldots,0,\overline b_{r+1},\ldots,\overline b_n),
\]
with $\overline a_i=a_i-b_i$, $1\leq i\leq r$ and $\overline{b_j}=b_j-a_j$, $r<j\leq n$. We also assume that $\overline b_{r+1},\ldots,\overline b_n$ are in descending order. The set $J$ and $j\in J$ will be chosen after this permutation, and then we can return to the original configuration with the inverse permutation.

Since $0<|\overline\alpha|\leq|\overline\beta|$, there exists $k\in\{r+1,\ldots,n\}$ such that 
\begin{equation}\label{s2}
|\overline\alpha|\leq\sum_{i=1}^k\overline b_i\mbox{ and }|\overline\alpha|>\sum_{i=1}^{k-1}\overline b_i.
\end{equation}
Take $J=\{1,\ldots,k\}$. For any fixed $j\in J$ we set 
\[
\alpha'=A_{J,j}\alpha\mbox{ and }\beta'=A_{J,j}\beta.
\]
Then
\[\alpha'=(a_1,\ldots,a'_j=\sum_{i=1}^{k}a_i,\ldots,a_n)\mbox{ and }
\beta'=(b_1,\ldots,b'_j=\sum_{i=1}^{k}b_i,\ldots,b_n).
\]

To calculate $\tau(\alpha',\beta')$, we denote 
\[\overline{\alpha'}=\alpha'-\gamma_{\alpha'\beta'}\mbox{ and }\overline{\beta'}=\beta'-\gamma_{\alpha'\beta'}.
\]
One can show that
\[
|\overline{\alpha'}|=|\overline\alpha|-\overline{a}_j+\overline{a'}_j\mbox{ and }|\overline{\beta'}|=|\overline\beta|-\overline{b}_j+\overline{b'}_j,
\]
where $\overline{a'}_j$ is the $j$th coordinate of $\overline{\alpha'}$, and $\overline{b'}_j$ is the $j$th coordinate of $\overline{\beta'}$.

We claim that $\overline{a'}_j=0$. Indeed, we know that $\overline{a'}_j=a'_j-\min\{a'_j,b'_j\}=\max\{a'_j-b'_j,0\}$, but
\[
a'_j-b'_j=\sum_{i=1}^{k}a_i-\sum_{i=1}^{k}b_i=\sum_{i=1}^r(a_i-b_i)-\sum_{i=r+1}^k(b_i-a_i)
\]
\[
=\sum_{i=1}^k\overline{a}_i-\sum_{i=1}^k\overline{b}_i=|\overline\alpha|-\sum_{i=1}^k\overline b_i\leq 0,
\]
because $|\overline\alpha|\leq\sum_{i=1}^k\overline b_i$.

We will analyze the cases $j\leq r$ and $j>r$ separately. 

If $j\leq r$, then $\overline a_j>0$, and we have
\[
|\overline{\alpha'}|=|\overline\alpha|-\overline a_j<|\overline\alpha|.
\]
Hence
\[
\tau(\alpha',\beta')=(\min\{|\overline{\alpha'}|,|\overline{\beta'}|\},\max\{|\overline{\alpha'}|,|\overline{\beta'}|\})\leq_{lex}(|\alpha'|,|\beta'|)<_{lex}(|\alpha|,|\beta|)=\tau(\alpha,\beta)
\]
and the result follows.

If $j>r$, we have $|\overline{\alpha'}|=|\overline\alpha|$, since $\overline{a'}_j=\overline{a}_j=0$. However, in this case, we claim that $\overline{b'}_j=\max\{b'_j-a'_j,0\}<\overline{b}_j$. Indeed, since $k\geq j>r$ and the $\overline b_i's$ are decreasing for $i>r$, the inequalities (\ref{s2}) guarantee that $
\overline b_j>0$. For the inequality $b_j'-a_j'<\overline{b}_j$, we will use that $\overline b_j\geq\overline b_k$, since the $\overline b_i's$ are decreasing for $i>r$ and $r<j\leq k$. We have
\[
b'_j-a'_j=\sum_{i=1}^k\overline b_i-|\overline\alpha|\leq\overline{b_j}+\sum_{i=1}^{k-1}\overline b_i-|\overline\alpha|.
\]
Since $|\overline{\alpha}|>\sum_{i=1}^{k-1}\overline{b}_i$, we have $\sum_{i=1}^{k-1}\overline{b}_i-|\overline{\alpha}|<0$, and then
\[
b'_j-a'_j\leq\overline{b_j}+\sum_{i=1}^{k-1}\overline b_i-|\overline\alpha|<\overline b_j.
\]
Finally,
\[
|\overline{\beta'}|=|\overline{\beta}|-\overline b_j+\overline{b'}_j<|\overline{\beta}|,
\]
and therefore
\[
\tau(\alpha',\beta')=(\min\{|\overline{\alpha'}|,|\overline{\beta'}|\},\max\{|\overline{\alpha'}|,|\overline{\beta'}|\})\leq_{lex}(|\alpha'|,|\beta'|)<_{lex}(|\alpha|,|\beta|)=\tau(\alpha,\beta).
\]
\end{proof}

\section{Kuhlmann and Knaf's Perron transform}

Let $\Gamma$ be a finitely generated ordered abelian group and $\mathcal{B}=\{\gamma_1,\ldots,\gamma_n\}$ a basis of $\Gamma$ (i.e., $\Gamma=\gamma_1\mathbb{Z}\oplus\ldots\oplus\gamma_n\mathbb{Z}$) formed by positive elements. Such basis exists because every ordered abelian group is free; see \cite{Elliot}.
\begin{Def}
A \textit{simple Perron transform} on $\mathcal{B}$ is a new basis $\mathcal{B}_1=\{\gamma^{(1)}_1,\ldots,\gamma^{(1)}_n\}$ of $\Gamma$, obtained in the following way: let $J\subseteq\{1,\ldots,n\}$ and $j\in J$ such that $\gamma_j\leq\gamma_i$ for all $i\in J$. Then 
\[
\gamma^{(1)}_i=\left\{
\begin{array}{ll}
\gamma_i-\gamma_j &\mbox{if }i\in J\setminus\{j\}\\
\gamma_i & \mbox{otherwise}
\end{array}.
\right.
\]
\end{Def}
Observe that $\mathcal{B}_1$ is indeed a basis of $\Gamma$ and is formed by positive elements, since $\gamma_i>\gamma_j$ for all $i\in J\setminus\{j\}$. We define a \textit{Perron transform} on $\mathcal{B}$ as a basis $\mathcal{B}'$, obtained by perform finitely many successive simple Perron transforms starting from $\mathcal{B}$. We denote, whenever necessary, 
\begin{equation}\label{s31}
\mathcal{B}'=(\mathcal{B}_0=\mathcal{B},\mathcal{B}_1,\ldots,\mathcal{B}_{l-1},\mathcal{B}_l=\mathcal{B}'),
\end{equation}
where $\mathcal{B}_k$ is a simple Perron transform of $\mathcal{B}_{k-1}$, for $k=1,\ldots,l$.

Let $\alpha\in\Gamma$. If $\alpha$ is written on the basis $\mathcal{B}$ by
\[
\left[\alpha\right]_\mathcal{B}=(a_1,\ldots,a_n)=a_1\gamma_1+\ldots+a_n\gamma_n,
\]
then $\alpha$ is written on the basis $\mathcal{B}_1$ by
\begin{equation}\label{s32}
\left[\alpha\right]_{\mathcal{B}_1}=(a_1,\ldots,a_{j-1},\sum_{i\in J}a_i,a_{j+1},\ldots,a_n)=\left[A_{J,j}\alpha\right]_\mathcal{B}.
\end{equation} 
The matrix $A_{J,j}$ is the matrix of change of basis, from $\mathcal{B}$ to $\mathcal{B}_1$, which we also denoted by $A_{\mathcal{B},\mathcal{B}_1}$. If $\mathcal{B}'$ is the Perron transform (\ref{s31}), then
\[
A_{\mathcal{B},\mathcal{B'}}=A_{\mathcal{B}_{l-1},\mathcal{B}'}\ldots A_{\mathcal{B},\mathcal{B}_1}.
\]

For a subset $\mathcal{D}$ of $\Gamma$, we denote
\[
\langle\mathcal{D}\rangle_+:=\left\{\displaystyle\sum_{i=1}^nm_id_i\ |\ m_i\in\mathbb{N},\ d_i\in\mathcal{D}\right\}.
\]
We see by (\ref{s32}) that 
\[
\langle\mathcal{B}\rangle_+\subseteq\langle\mathcal{B}_1\rangle_+\subseteq\ldots\subseteq\langle\mathcal{B}'\rangle_+.
\]

\begin{Lema}\label{Kulhmhdsjkad}
Let $\Gamma$ be a finitely generated ordered abelian group, $\mathcal{B}$ a basis of $\Gamma$ formed by positive elements and $\alpha\in\Gamma$ a positive element. Then there exists a Perron transform $\mathcal{B}'$ of $\mathcal{B}$ such that $\alpha\in\langle B'\rangle_+$.
\end{Lema}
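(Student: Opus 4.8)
The plan is to reduce the statement to Theorem \ref{mainresult} by a judicious choice of the second vector $\beta$. Write $[\alpha]_{\mathcal{B}} = (a_1,\ldots,a_n) \in \Z^n$; the coefficients $a_i$ need not be non-negative, which is the source of the only real difficulty. So first I would split off the negative part: set $\alpha^+ := \sum_{a_i > 0} a_i \gamma_i$ and $\alpha^- := -\sum_{a_i < 0} a_i \gamma_i$, so that both lie in $\langle \mathcal{B}\rangle_+$ and $\alpha = \alpha^+ - \alpha^-$. Since $\alpha > 0$ we have $\alpha^+ > \alpha^- \geq 0$ in $\Gamma$. Let $u, v \in \N^n$ be the coordinate vectors of $\alpha^+$ and $\alpha^-$ on $\mathcal{B}$, so that $u,v$ have disjoint supports, $u - v = [\alpha]_{\mathcal{B}}$, and $[\alpha^+]_{\mathcal{B}} = u$, $[\alpha^-]_{\mathcal{B}} = v$.

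Next I would apply Theorem \ref{mainresult} to the pair $(u,v) \in \N^n \times \N^n$. It produces subsets $J_1,\ldots,J_l$ and indices $j_1,\ldots,j_l$ with $A = A_{J_l,j_l}\cdots A_{J_1,j_1}$ such that $Au \leq Av$ or $Av \leq Au$ componentwise. Here the freedom that each $j_k$ is \emph{arbitrary} in $J_k$ is exactly what lets me turn these matrices into simple Perron transforms: at step $k$, having built the basis $\mathcal{B}_{k-1}$, I must choose $j_k \in J_k$ with $\gamma^{(k-1)}_{j_k} \leq \gamma^{(k-1)}_i$ for all $i \in J_k$ — that is, $j_k$ should index a \emph{minimal} element of $\{\gamma^{(k-1)}_i : i \in J_k\}$ with respect to the order of $\Gamma$. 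Such a minimal element exists because the set is finite and the order is total, and this is a legitimate instantiation of the "randomly assigned" index in Theorem \ref{mainresult}. With these choices the sequence $\mathcal{B}_0 = \mathcal{B}, \mathcal{B}_1, \ldots, \mathcal{B}_l = \mathcal{B}'$ is a bona fide Perron transform, and by (\ref{s32}) its change-of-basis matrix is exactly $A$, so $[\alpha^+]_{\mathcal{B}'} = Au$ and $[\alpha^-]_{\mathcal{B}'} = Av$.

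Now I conclude. Suppose $Av \leq Au$ componentwise (the other case is symmetric, and in fact can be excluded since $\alpha^+ > \alpha^-$, but I do not even need that). Then $[\alpha]_{\mathcal{B}'} = [\alpha^+]_{\mathcal{B}'} - [\alpha^-]_{\mathcal{B}'} = Au - Av \in \N^n$, i.e. $\alpha$ has non-negative coordinates on the positive basis $\mathcal{B}'$, which is precisely the assertion $\alpha \in \langle \mathcal{B}'\rangle_+$. If instead $Au \leq Av$, then $-\alpha = [\alpha^-]_{\mathcal{B}'} - [\alpha^+]_{\mathcal{B}'} \in \langle \mathcal{B}'\rangle_+$; but $\mathcal{B}'$ consists of positive elements and $\langle \mathcal{B}'\rangle_+$ then contains only non-negative elements of $\Gamma$, forcing $-\alpha \geq 0$, contradicting $\alpha > 0$ — so this case does not occur. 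The main obstacle, as indicated, is the bookkeeping that the minimality condition defining a simple Perron transform is compatible with the "arbitrary choice of $j_k$" clause of Theorem \ref{mainresult}; once that is observed the proof is immediate.
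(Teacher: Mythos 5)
Your proof is correct and follows essentially the same route as the paper's: decompose $\alpha=\alpha_+-\alpha_-$ into parts with non-negative coordinates, apply Theorem \ref{mainresult} to the coordinate vectors while exploiting the freedom in the choice of each $j_k$ to pick an index minimizing $\gamma^{(k-1)}_{j_k}$ so that every step is a legitimate simple Perron transform, and rule out the case $Au\leq Av$ by the positivity of $\alpha$ and of the elements of $\mathcal{B}'$. If anything, you make explicit the one point the paper passes over tersely, namely that the ``randomly assigned'' clause of Theorem \ref{mainresult} is precisely what makes the construction compatible with the minimality condition in the definition of a simple Perron transform.
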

\begin{proof}
Note that $\alpha\in\langle \mathcal{B}'\rangle_+$ if, and only if, $\alpha$ has non-negative coordinates on the basis $\mathcal{B}'$.

Write
\[
\alpha=\alpha_+-\alpha_-,
\]
were $[\alpha_+]_{\mathcal{B}}$ and $[\alpha_-]_{\mathcal{B}}$ have non-negative coordinates. By Theorem 2.1, there is a matrix
\[A=A_{J_l,j_l}\ldots A_{J_1,j_1}\]
such that $j_1,\ldots,j_l$ is given so that $A$ is the change-of-basis matrix of a Perron transform $\mathcal{B'}$ on $\mathcal{B}$, and
\[
A[\alpha_+]_\mathcal{B}\geq A[\alpha_-]_\mathcal{B}\mbox{ or }A[\alpha_+]_\mathcal{B}\leq A[\alpha_-]_\mathcal{B}
\]
componentwise. Since $\mathcal{B}'$ is formed by positive elements and $\alpha$ is positive, the equation
\[
[\alpha]_{\mathcal{B}'}=[\alpha_+]_{\mathcal{B}'}-[\alpha_-]_{\mathcal{B}'}
\]
ensures that
\[
[\alpha_+]_{\mathcal{B}'}=A[\alpha_+]_\mathcal{B}\geq A[\alpha_-]_\mathcal{B}=[\alpha_-]_{\mathcal{B}'}
\]
componentwise. Then
\[
[\alpha]_{\mathcal{B}'}=[\alpha_+]_{\mathcal{B}'}-[\alpha_-]_{\mathcal{B}'}
\]
has non-negative coordinates, and therefore
\[
\alpha\in\langle\mathcal{B}'\rangle_+.
\]
\end{proof}

In \cite{Kuhl} Knaf and Kuhlmann use the following result as an important step to prove that every Abhyankar valuation admits local uniformization.

\begin{Teo}[Lemma 4.2 of \cite{Kuhl}]\label{TeotransfPerrKK}
Let $\Gamma$ be a finitely generated ordered abelian group and $\alpha_1,\ldots, \alpha_l\in\Gamma$ positive elements. Then there exists a basis $\mathcal{B}$ of $\Gamma$, formed by positive elements, such that
\[
\alpha_1,\ldots,\alpha_l\in\langle\mathcal{B}\rangle_+.
\]
\end{Teo}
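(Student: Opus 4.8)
The plan is to reduce to the single-element case already settled in Lemma~\ref{Kulhmhdsjkad} and then iterate it, the essential point being that every simple Perron transform can only \emph{enlarge} the monoid $\langle\ \rangle_+$.

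First I would fix an initial basis $\mathcal{B}_0$ of $\Gamma$ consisting of positive elements; such a basis exists because $\Gamma$ is free (see \cite{Elliot}), for instance by taking an arbitrary basis and replacing each element by its negative whenever it fails to be positive. I would then process $\alpha_1,\ldots,\alpha_l$ one at a time: having produced a Perron transform $\mathcal{B}_{k-1}$ of $\mathcal{B}_0$ (in particular a basis of positive elements) with $\alpha_1,\ldots,\alpha_{k-1}\in\langle\mathcal{B}_{k-1}\rangle_+$, I would apply Lemma~\ref{Kulhmhdsjkad} to the group $\Gamma$, the basis $\mathcal{B}_{k-1}$ and the positive element $\alpha_k$ to obtain a Perron transform $\mathcal{B}_k$ of $\mathcal{B}_{k-1}$, again formed by positive elements, with $\alpha_k\in\langle\mathcal{B}_k\rangle_+$. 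Setting $\mathcal{B}:=\mathcal{B}_l$ should then finish the proof.

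Two routine checks make the iteration legitimate. First, $\mathcal{B}_k$ is again a Perron transform of $\mathcal{B}_0$: concatenating the defining chain \eqref{s31} of $\mathcal{B}_{k-1}$ over $\mathcal{B}_0$ with that of $\mathcal{B}_k$ over $\mathcal{B}_{k-1}$ exhibits $\mathcal{B}_k$ as a finite succession of simple Perron transforms starting at $\mathcal{B}_0$. Second, the membership of the earlier $\alpha_i$ survives: since $\mathcal{B}_k$ is a Perron transform of $\mathcal{B}_{k-1}$, the chain of inclusions $\langle\mathcal{B}_{k-1}\rangle_+\subseteq\cdots\subseteq\langle\mathcal{B}_k\rangle_+$ recorded right after \eqref{s32} gives $\alpha_1,\ldots,\alpha_{k-1}\in\langle\mathcal{B}_k\rangle_+$ as well. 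The only thing one might worry about is precisely this last point — that a later simple Perron transform could destroy an inclusion $\alpha_i\in\langle\ \rangle_+$ obtained at an earlier stage — but the monotonicity $\langle\mathcal{B}_{k-1}\rangle_+\subseteq\langle\mathcal{B}_k\rangle_+$, immediate from the coordinate formula \eqref{s32}, rules this out, so in fact there is no serious obstacle.
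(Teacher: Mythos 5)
Your proposal is correct and follows essentially the same route as the paper: iterate Lemma~\ref{Kulhmhdsjkad} over $\alpha_1,\ldots,\alpha_l$, using the monotonicity $\langle\mathcal{B}_{k-1}\rangle_+\subseteq\langle\mathcal{B}_k\rangle_+$ to preserve the earlier memberships. Your two ``routine checks'' (composability of Perron transforms and persistence of membership) are exactly the points the paper's proof relies on, just spelled out slightly more explicitly.
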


\begin{proof} We start with a basis $\mathcal{B}_0$ of $\Gamma$ formed by positive elements. By Lemma 3.1, there is a Perron transform $\mathcal{B}_1$ of $\mathcal{B}_0$ such that
\[
\alpha_1\in\langle\mathcal{B}_1\rangle_+.
\]
By Lemma \ref{Kulhmhdsjkad}, there is a Perron transform $\mathcal{B}_2$ of $\mathcal{B}_1$ such that
\[
\alpha_2\in\langle\mathcal{B}_2\rangle_+.
\]
Since $\langle\mathcal{B}_1\rangle_+\subseteq\langle\mathcal{B}_2\rangle_+$, we have
\[
\alpha_1,\alpha_2\in\langle\mathcal{B}_2\rangle_+.
\]
Repeating this process to include all elements $\alpha_1,\ldots,\alpha_l$, take $\mathcal{B}=\mathcal{B}_l$, and we have
\[
\alpha_1,\ldots,\alpha_l\in\langle\mathcal{B}\rangle_+.
\]
\end{proof}

\section{Cutkosky and Mourtada's Perron transform}

Let $k[x_1,\ldots,x_m]$ be a polynomial ring over a field $k$. Let $\nu$ be a valuation in $k[x_1,\ldots,x_m]$ with center $(x_1,\ldots,x_m)$, that is, $\nu(k^\times)=0$ and $\nu(f)>0$ for all $f\in(x_1,\ldots,x_n)$. Suppose that $\mathcal{B}=\{\nu(x_1),\ldots,\nu(x_n)\}$ is a rational basis of $\Gamma_\nu\otimes\Q$, where $\Gamma_\nu$ is the value group of $\nu$. Let $x_1',\ldots, x_m'$ be such that
\begin{displaymath}
x_i=\left\{
\begin{array}{ll}
\displaystyle\prod_{j=1}^n\left(x_j'\right)^{a_{ij}} &\mbox{if }1\leq i\leq n\\
x_i' &\mbox{if }n<i\leq m
\end{array},
\right.
\end{displaymath}
where $a_{ij}\in \N$, $\det(a_{ij})=1$ and $0<\nu\left(x_i'\right)$ for every $1\leq i\leq m$.
In \cite{Cut}, the authors call the inclusion map $k[x_1,\ldots,x_m]\lra k\left[x_1',\ldots,x_m'\right]$ a Perron transform of type (6).

\begin{Teo}[Lemma 4.1 of \cite{Cut}]\label{TPcutmoutasa}
Let $M_1=x_1^{d_1}\ldots x_n^{d_n}$ and $M_2=x_1^{e_1}\ldots x_n^{e_n}$ two monomials, with $d_1,\ldots,d_n,e_1,\ldots,e_n\geq 0$ and $\nu(M_1)<\nu(M_2)$. Then there exists a Perron transform of type (6) such that $M_1$ divides $M_2$ in $k[x_1',\ldots, x_m']$.
\end{Teo}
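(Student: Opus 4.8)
The plan is to reduce Theorem \ref{TPcutmoutasa} to Theorem \ref{mainresult} exactly as Lemma \ref{Kulhmhdsjkad} was reduced to it, but now keeping careful track of how a Perron transform of type (6) acts on exponent vectors of monomials. First I would set $\alpha=(d_1,\ldots,d_n)$ and $\beta=(e_1,\ldots,e_n)$ in $\N^n$, the exponent vectors of $M_1$ and $M_2$ with respect to $x_1,\ldots,x_n$ (the variables $x_{n+1},\ldots,x_m$ do not appear in $M_1,M_2$, so they play no role in the combinatorics; they are simply carried along by $x_i'=x_i$). The key observation, parallel to equation (\ref{s32}), is that if we perform a change of variables $x_i=\prod_j (x_j')^{a_{ij}}$ with matrix $(a_{ij})$, then a monomial with exponent vector $\gamma$ in the $x$'s has exponent vector $(a_{ij})^{t}\gamma$ — or more conveniently, after transposing the bookkeeping, the exponent vectors transform by a matrix that is a product of the $A_{J,j}$'s. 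Concretely, a single $A_{J,j}$ corresponds to the substitution $x_j \mapsto \prod_{i\in J} x_i'$, $x_i\mapsto x_i'$ for $i\neq j$, which has determinant $1$ and nonnegative integer entries, and sends the exponent vector $\gamma$ to $A_{J,j}\gamma$ by the formula displayed after the definition of $A_{J,j}$.

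Next I would apply Theorem \ref{mainresult} to $\alpha$ and $\beta$: it produces subsets $J_1,\ldots,J_l$ and indices $j_1,\ldots,j_l$ — here we are free to choose the $j_k$, since the theorem allows any assignment in $J_k$ — with $A=A_{J_l,j_l}\cdots A_{J_1,j_1}$ satisfying $A\alpha\leq A\beta$ or $A\beta\leq A\alpha$ componentwise. Composing the corresponding variable substitutions gives a map $x_i=\prod_{j=1}^n (x_j')^{a_{ij}}$ with $(a_{ij})=A^{t}$ (or $A$, depending on the chosen convention for writing monomial exponents as column vectors); in any case $\det(a_{ij})=\det A=1$ because each $A_{J_k,j_k}$ has determinant $1$, and all entries are in $\N$ since each factor has nonnegative integer entries and $\N$ is closed under the relevant products. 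Under this substitution $M_1$ becomes the monomial with exponent vector $A\alpha$ and $M_2$ the one with exponent vector $A\beta$, up to the variables $x_{n+1}',\ldots,x_m'$ which do not occur in either.

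The remaining point is to arrange that it is $A\alpha\le A\beta$ that holds, not $A\beta\le A\alpha$ — i.e.\ that $M_1$ divides $M_2$ and not the other way around. This is where the hypothesis $\nu(M_1)<\nu(M_2)$ enters, and it is the only place in the argument that uses the valuation at all. Since each $A_{J_k,j_k}$ is the change-of-coordinates matrix of a genuine Perron transform on the monomials, the value $\nu$ of a monomial is unchanged: $\nu(M_1)$ equals $\sum (A\alpha)_i\,\nu(x_i')$ and $\nu(M_2)=\sum (A\beta)_i\,\nu(x_i')$, with all $\nu(x_i')>0$ by construction. Hence $A\beta\le A\alpha$ componentwise would force $\nu(M_2)\le\nu(M_1)$, contradicting $\nu(M_1)<\nu(M_2)$; so the alternative that holds must be $A\alpha\le A\beta$, and therefore $M_1\mid M_2$ in $k[x_1',\ldots,x_m']$.

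The main obstacle — really the only thing requiring care — is the bookkeeping of transposes and conventions: one must check that the formula $x_j\mapsto\prod_{i\in J}x_i'$ really produces the matrix $A_{J,j}$ acting on exponent vectors in the way stated after its definition, and that composing substitutions corresponds to multiplying the $A_{J,j}$ in the order prescribed by Theorem \ref{mainresult} (rather than the reverse order); once the conventions are pinned down, the verification that $\det(a_{ij})=1$, that $a_{ij}\in\N$, and that $0<\nu(x_i')$ are all immediate, and the divisibility conclusion falls out of the valuation inequality. I do not expect any genuinely new idea to be needed beyond the reduction already carried out for Theorem \ref{TeotransfPerrKK}.
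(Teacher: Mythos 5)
Your proposal is correct and follows essentially the same route as the paper: the paper invokes Lemma \ref{Kulhmhdsjkad} for the positive element $\nu(M_2)-\nu(M_1)$, and that lemma in turn applies Theorem \ref{mainresult} to the coordinate vectors of $\nu(M_1)$ and $\nu(M_2)$ in the basis $\{\nu(x_1),\ldots,\nu(x_n)\}$, which are exactly your exponent vectors $\alpha$ and $\beta$. The only difference is packaging — you track exponent vectors of monomials directly instead of passing through the value group and back — and your handling of the transpose bookkeeping, the choice of $\nu$-minimal $j_k\in J_k$ to keep $\nu(x_i')>0$, and the use of $\nu(M_1)<\nu(M_2)$ to select the correct branch of the dichotomy all match the paper's argument.
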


\begin{proof}
Since $\mathcal{B}$ is a rational basis of $\Gamma_\nu\otimes\mathbb{Q}$, we have that $\mathcal{B}$ is a basis, formed by positive elements, of the ordered subgroup $\Gamma:=\nu(x_1)\mathbb{Z}\oplus\ldots\oplus\nu(x_n)\mathbb{Z}$ of $\Gamma_v$. By Lemma \ref{Kulhmhdsjkad}, since $\nu(M_2)-\nu(M_1)>0$, there is a Perron transform $\mathcal{B}'=\{\gamma_1',\ldots,\gamma_n'\}$ such that $\nu(M_2)-\nu(M_1)\in\langle\mathcal{B}'\rangle_+$. 

Since $\langle\mathcal{B}\rangle_+\subseteq\langle\mathcal{B}'\rangle_+$, we have, for all $i=1,\ldots,n$,
\[
\nu(x_i)=\sum_{j=1}^n a_{ij}\gamma_j',
\]
where $a_{ij}\in\mathbb{N}$ for all $i,j=1,\ldots,n$. We have that $(a_{ij})$ is a matrix with non-negative entries and det$(a_{ij})=1$, since it represents a change of basis of a Perron transform.

Define $x_1',\ldots,x_m'$ by the equations
\begin{displaymath}
x_i=\left\{
\begin{array}{ll}
\displaystyle\prod_{j=1}^n\left(x_j'\right)^{a_{ij}} &\mbox{if }1\leq i\leq n\\
x_i' &\mbox{if }n<i\leq m
\end{array}.
\right.
\end{displaymath}
We have $\nu(x_j')=\gamma_j'>0$ for every $j$, $1\leq j\leq n$. Furthermore,
\[
\nu(M_2M_1^{-1})=\nu(M_2)-\nu(M_1)=b_1\gamma_1'+\ldots+b_n\gamma_n',
\]
where $b_1,\ldots,b_n\in\mathbb{N}$, since $\nu(M_2)-\nu(M_1)\in\langle\mathcal{B}'\rangle_+$. Then
\[
M_2M_1^{-1}={x'_1}^{b_1}\ldots {x'_n}^{b_n}\in k[x_1',\ldots,x_n'],
\]
and therefore $M_1$ divides $M_2$ in $k[x_1',\ldots,x_n']$.
\end{proof}

If $\mathcal{B}'$ is a simple Perron transform, then the first $n\times n$ quadrant of the matrix $(a_{ij})$ above is the matrix $A_{\mathcal{B},\mathcal{B}'}$, 
which is of the form $A_{J,j}$, for some $J\subseteq\{1,\ldots,n\}$ and $j\in J$. Then the $x'_i$'s are defined by 
\begin{displaymath}
x_i'=\left\{
\begin{array}{ll}
\frac{x_i}{x_j} &\mbox{if } i\in J\setminus\{j\}\\
x_i & otherwise
\end{array}.
\right.
\end{displaymath}
If $\mathcal{B}'$ is a Perron transform, then the $x_i$'s are defined by iteration of the above definition to simple Perron transforms.

As a corollary of Theorem 4.1 we have the following: 

\begin{Teo}
Let $f\in k[x_1,\ldots,x_m]$ be a polynomial. Then there is a Perron transform of type (6) such that $f$ is written by
\[
f={x_1'}^{b_1}\ldots{x_n'}^{b_n}g,
\]
where $b_1,\ldots,b_n\in\mathbb{N}$ and $g\in k[x_1',\ldots,x_m']\setminus (x_1',\ldots,x_n')$.
\end{Teo}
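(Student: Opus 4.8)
The plan is to reduce the statement about a general polynomial $f$ to the monomial case already handled in Theorem \ref{TPcutmoutasa}. Write $f = \sum_{i=1}^{N} c_i M_i$ with $c_i \in k^{\times}$ and $M_i = x_1^{d_{i1}}\cdots x_n^{d_{in}}$ distinct monomials in $x_1,\ldots,x_n$ (absorbing any occurrences of $x_{n+1},\ldots,x_m$ into the coefficients, since those variables are unchanged by a type (6) transform). The factor $x_1'^{b_1}\cdots x_n'^{b_n}$ we are looking for should be, roughly, the ``greatest common monomial divisor'' of the $M_i$ measured in the $x_j'$ variables; the point of the Perron transform is to arrange that this gcd is actually attained by one of the $M_i$, i.e., that some $M_{i_0}$ divides every other $M_i$ in $k[x_1',\ldots,x_m']$.

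First I would handle the trivial case: if some $M_i$ is already not in $(x_1,\ldots,x_n)$ — equivalently if some $M_i = 1$ in the $x_1,\ldots,x_n$ variables — then nothing needs doing and we take $b_1 = \cdots = b_n = 0$. Otherwise, the key step is to find a Perron transform of type (6) after which the exponent vectors of the $M_i$ become pairwise comparable componentwise, so that one of them, say $M_{i_0}$, has the componentwise-smallest exponent vector. To do this I would iterate Theorem \ref{TPcutmoutasa} (or rather Lemma \ref{Kulhmhdsjkad}) over all pairs: for each pair $(i,i')$ with $\nu(M_i) < \nu(M_{i'})$ — after possibly swapping to make the $\nu$-values comparable, which they are since $\Gamma$ is ordered — apply a Perron transform making $M_i \mid M_{i'}$, and use that Perron transforms compose and that $\langle\mathcal{B}\rangle_+\subseteq\langle\mathcal{B}'\rangle_+$ is preserved, so divisibilities already achieved are not destroyed by later transforms. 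After finitely many steps every pair is comparable; since a finite totally ordered set (under $\nu$, with ties broken — here one must be slightly careful, see below) has a minimum, there is an $M_{i_0}$ dividing all $M_i$ in $k[x_1',\ldots,x_m']$. Then factor it out: $f = M_{i_0}\big(\sum_i c_i\, M_i M_{i_0}^{-1}\big)$, and $g := \sum_i c_i M_i M_{i_0}^{-1}$ lies in $k[x_1',\ldots,x_m']$ with the $i_0$-th term equal to $c_{i_0}$, a unit, so $g \notin (x_1',\ldots,x_n')$. Writing $M_{i_0} = x_1'^{b_1}\cdots x_n'^{b_n}$ in the new variables (with $b_j \in \N$, since the change-of-basis matrix of a Perron transform has non-negative entries) finishes the argument.

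The main obstacle, and the point deserving the most care, is the tie-breaking: $\nu(M_i) = \nu(M_{i'})$ does not force $M_i = M_{i'}$ as monomials in $x_1',\ldots,x_n'$, so ``$\nu$-minimal'' need not be ``divides everything''. The clean fix is to pass to exponent vectors directly: rather than only making $\nu$-smaller monomials divide $\nu$-larger ones, apply Lemma \ref{Kulhmhdsjkad} to the group $\Gamma = \nu(x_1)\Z\oplus\cdots\oplus\nu(x_n)\Z$ and the elements $\nu(M_i) - \nu(M_{i'})$ (and their negatives) exactly as in the proof of Theorem \ref{TPcutmoutasa}, so that after the transform \emph{every} difference of exponent vectors $[d_i - d_{i'}]_{\mathcal{B}'}$ has all coordinates of one sign; this makes the exponent vectors totally ordered by the componentwise order, which does have a minimum. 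One then notes this is compatible with $\nu$ being order-preserving, so no contradiction arises, and the minimal exponent vector is realized by some $M_{i_0}$ as required. Everything else — composing Perron transforms, the persistence of $\langle\cdot\rangle_+$ inclusions under further transforms, reading off $b_j \in \N$ from the change-of-basis matrix, and checking $g\notin(x_1',\ldots,x_n')$ via its unit term — is routine given the machinery already developed.
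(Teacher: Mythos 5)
Your proposal is correct and follows essentially the same route as the paper: write $f$ as a sum of monomials in $x_1,\ldots,x_n$ with coefficients in $k[x_{n+1},\ldots,x_m]$, apply Theorem \ref{TPcutmoutasa} successively so that the $\nu$-minimal monomial divides all the others (earlier divisibilities persist under later transforms since the change-of-basis matrices have non-negative entries), and factor it out. The tie-breaking issue you devote the most care to does not actually arise: since $\{\nu(x_1),\ldots,\nu(x_n)\}$ is a rational basis of $\Gamma_\nu\otimes\Q$, distinct monomials in $x_1,\ldots,x_n$ have distinct values, so the $\nu$-minimal monomial is unique and your extra fix is unnecessary.
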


\begin{proof}
Let $R=k[x_{n+1},\ldots,x_m]$. Write $f$ as
\[
f=g_1M_1+\ldots+g_kM_k,
\]
where $g_i\in R\setminus\{0\}$ and $M_i$ are monomials of $R[x_1,\ldots,x_n]$ with coefficients equal to $1$, for all $i=1,\ldots,k$. We may assume that $\nu(M_1)<\nu(M_i)$ for all $i=2,\ldots,k$. 
Applying Theorem 4.1 successively, there exists a type $(6)$ Perron transform such that $M_1$ divides $M_i$ in $k[x_1',\ldots, x_m']$ for all $i=1,\ldots,k$. Then
\[
f=M_1\left(g_1+g_2\frac{M_2}{M_1}+\ldots g_k\frac{M_k}{M_1}\right),
\]
where
\[
g_1+g_2\frac{M_2}{M_1}+\ldots g_k\frac{M_k}{M_1}\in k[x_1',\ldots,x_m']\setminus (x_1',\ldots,x_n').
\]
\end{proof}

\section{Hironaka's game}

Let $V$ be a finite number of points in $\N^n$ with positive convex hull $N:=\CH(V+\R_{\geq 0}^n)$. Consider two ``players", $\mathcal P_1$ and $\mathcal P_2$, competing in the following game (known as Hironaka's polyhedra game): player $\mathcal P_1$ chooses a subset $J$ of $\{1,\ldots,n\}$ and, afterwards, player $\mathcal P_2$ chooses and element $j\in J$. After this ``round", the set $V$ is replaced by the set $V_1$ obtained as follows: for each element $\alpha=(a_1,\ldots,a_n)\in V$ the corresponding element $\alpha^1=(b_1,\ldots,b_n)\in V_1$ will be
\[
b_j:=\sum_{i\in J}a_i\mbox{ e }b_k:=a_k\mbox{ se }k\neq j.
\]
We define then $N_1:=\CH(V_1+\R_{\geq 0}^n)$. Player $\mathcal P_1$ wins the game if, after finitely many rounds, the set $N_l$ becomes an ``ortant", i.e., a set of the form $\alpha+\R_{\geq 0}^n$ for some $\alpha\in\N^n$. The main result of \cite{Spiv} is the following:
\begin{Teo}\label{Hirgamespiv}
There exists a winning strategy for player $\mathcal P_1$.
\end{Teo}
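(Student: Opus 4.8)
The plan is to deduce Theorem \ref{Hirgamespiv} directly from Theorem \ref{mainresult} by encoding the polyhedra game as a finite collection of componentwise-comparison problems, one for each pair of vertices of $V$. First I would observe that a round of Hironaka's game — player $\mathcal P_1$ picks $J\subseteq\{1,\ldots,n\}$, player $\mathcal P_2$ picks $j\in J$, and each $\alpha\in V$ is replaced by $A_{J,j}\alpha$ — is exactly the transformation appearing in Theorem \ref{mainresult}, applied simultaneously to every element of $V$ by the same matrix. Next I would recall the elementary fact that $N=\CH(V+\R_{\geq 0}^n)$ is an ortant $\gamma+\R_{\geq 0}^n$ if and only if there is a single $\alpha\in V$ with $\alpha\leq\alpha'$ componentwise for every $\alpha'\in V$ (after discarding non-vertices, the unique minimal vertex). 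So $\mathcal P_1$'s goal is to force, after finitely many rounds, a configuration $A V$ whose elements are pairwise comparable in the componentwise partial order.

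The strategy for $\mathcal P_1$ is then built by a finite iteration over the (finitely many) unordered pairs $\{\alpha,\beta\}$ with $\alpha,\beta\in V$. Enumerate these pairs as $P_1,\ldots,P_N$. Starting from $V^{(0)}=V$, suppose inductively we have reached $V^{(t-1)}$, the image of $V$ under some product $A^{(t-1)}$ of matrices $A_{J,j}$, and that the first $t-1$ pairs have already become componentwise comparable; crucially, note that once two elements are componentwise comparable, they remain so under any further application of an $A_{J,j}$, since each $A_{J,j}$ is an order-preserving map $\N^n\to\N^n$ for the componentwise order (it replaces one coordinate by a nonnegative sum of coordinates, which is monotone). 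Now take the image of $P_t=\{\alpha,\beta\}$ in $V^{(t-1)}$ and run the strategy from Theorem \ref{mainresult} on this single pair: Theorem \ref{mainresult} tells $\mathcal P_1$ which sets $J_1,J_2,\ldots$ to play — each depending only on the current data and on the $j$'s that $\mathcal P_2$ has so far chosen — so that after finitely many rounds the accumulated matrix makes the images of $\alpha$ and $\beta$ componentwise comparable. $\mathcal P_1$ plays exactly these sets; $\mathcal P_2$'s responses are the adversarial choices $j_k\in J_k$, which is precisely the "randomly assigned $j_k\in J_k$" in the hypothesis of Theorem \ref{mainresult}, so the conclusion holds regardless of how $\mathcal P_2$ plays. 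After finitely many such rounds pair $P_t$ is resolved, and by the monotonicity remark pairs $P_1,\ldots,P_{t-1}$ stay resolved, so we may pass to $V^{(t)}$ and increment $t$.

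After $t=N$ stages all pairs of elements of $V^{(N)}$ are componentwise comparable, hence $V^{(N)}$ has a componentwise-minimal element, and therefore $N_l=\CH(V^{(N)}+\R_{\geq 0}^n)$ is an ortant: $\mathcal P_1$ has won. Since each of the $N$ stages terminates in finitely many rounds by Theorem \ref{mainresult}, the whole strategy terminates in finitely many rounds, and it is indeed a strategy — at each round $\mathcal P_1$'s move is determined by the past history (which pairs are resolved, how far along the current pair's sub-strategy we are, and $\mathcal P_2$'s previous choices), matching the dependence allowed in Theorem \ref{mainresult}.

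I expect the main obstacle to be purely bookkeeping rather than conceptual: one must check carefully that the "choose $J_k$ as a function of $\{\alpha,\beta,J_1,\ldots,J_{k-1},j_1,\ldots,j_{k-1}\}$" clause of Theorem \ref{mainresult} is compatible with interleaving it across different vertex pairs — i.e., that when we restart the sub-strategy for pair $P_t$, the extra matrices already applied for earlier pairs only change the current pair's vectors to some new $\alpha,\beta\in\N^n$, on which Theorem \ref{mainresult} applies afresh. The monotonicity of $A_{J,j}$ for the componentwise order (so resolved pairs stay resolved) is the one small lemma that needs an explicit, if short, verification; everything else is a direct translation of the game into the language of Theorem \ref{mainresult} together with the standard description of when a positive convex polyhedron is an ortant.
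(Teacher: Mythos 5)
Your proposal is correct and follows essentially the same route as the paper: reduce to the two-point case (Theorem \ref{mainresult}) using the key observation that each $A_{J,j}$ preserves componentwise inequalities, so already-comparable pairs stay comparable. The only difference is organizational — the paper inducts on $|V|$, comparing the running minimum against one new point at a time, while you resolve all pairs; both rest on exactly the same two ingredients.
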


Observe that $\alpha^1$ above is equal to $A_{J,j}\alpha$, and player $\mathcal{P}_1$ wins the game after the $l$th round if, and only if, the set $V_l$ has minimum element which respect to the componentwise order. Then we can refrase Theorem 5.1 in the following equivalent theorem:

\begin{Teo}\label{mainresultsec5}
Let $V\subset\N^n$ be a finite non-empty set. Then there exist $l\in\mathbb{N}$ and finite sequences $J_1,\ldots,J_l\subseteq\{1,\ldots,n\}$ and $j_1,\ldots,j_l$ such that $J_k$ is chosen in function of the set
\[
V\cup\{J_1,\ldots,J_{k-1},j_1,\ldots,j_{k-1}\},
\]
and 
\[
j_k\mbox{ is randomly assigned in }J_k,
\]
such that there exists $\alpha\in V$ for which
\[
A\alpha\leq A\beta\mbox{ componentwise, for every }\beta\in V,
\]
where
\[
A=A_{J_l,j_l}\ldots A_{J_1,j_1}.
\]
\end{Teo}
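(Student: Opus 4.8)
The plan is to reduce Theorem \ref{mainresultsec5} to iterated applications of Theorem \ref{mainresult} (equivalently Proposition \ref{mainprop}), exactly as Theorem \ref{TeotransfPerrKK} was deduced from Lemma \ref{Kulhmhdsjkad}. The key observation is that Theorem \ref{mainresult} lets us, for any \emph{pair} $\alpha,\beta\in\N^n$, produce a sequence of admissible moves (with $j_k$ randomly assigned in the chosen $J_k$) after which $A\alpha$ and $A\beta$ are comparable componentwise. We want to do this simultaneously for all pairs coming from a finite set $V$. The subtlety, compared to the Knaf--Kuhlmann situation, is that comparability of pairs is not preserved under further moves \emph{a priori}; so I will instead track the quantity that measures how far $V$ is from having a minimum, and show it strictly decreases.

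First I would set up the right potential function. For a finite set $V\subseteq\N^n$, define $\gamma_V=(c_1,\ldots,c_n)$ with $c_i:=\min_{\alpha\in V}a_i$ (the componentwise infimum, which lies in $\N^n$), and for each $\alpha=(a_1,\dots,a_n)\in V$ put $\overline\alpha:=\alpha-\gamma_V\in\N^n$. Then $V$ has a minimum for the componentwise order if and only if some $\overline\alpha=0$, i.e. if and only if $\min_{\alpha\in V}|\overline\alpha|=0$, where $|\cdot|$ is the sum norm. Note also that $A_{J,j}$ is order-preserving and injective on $\N^n$, and $A_{J,j}(\gamma_V)\leq A_{J,j}(\alpha)$ for all $\alpha\in V$, so $\gamma_{A_{J,j}V}\geq A_{J,j}\gamma_V$; consequently $|\overline{A_{J,j}\alpha}|\leq |A_{J,j}\alpha - A_{J,j}\gamma_V| = |A_{J,j}\overline\alpha|$ for each $\alpha$. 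So it suffices to find, at each stage where no $\overline\alpha$ vanishes, a set $J$ such that for \emph{every} $j\in J$ the multiset $\{|\overline{A_{J,j}\alpha}| : \alpha\in V\}$ is strictly smaller, in an appropriate well-ordered sense, than $\{|\overline\alpha|:\alpha\in V\}$.

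The cleanest route is to order $V=\{\alpha_1,\dots,\alpha_t\}$ so that $|\overline{\alpha_1}|\le\cdots\le|\overline{\alpha_t}|$ and run the argument on the pair $(\alpha_1,\alpha_t)$ realizing the maximum, but in fact it is enough to apply Proposition \ref{mainprop} to a pair $(\alpha,\beta)$ in $V$ with $0<|\overline\alpha|$, $0<|\overline\beta|$ computed \emph{with respect to $\gamma_V$} rather than $\gamma_{\alpha\beta}$; since $\gamma_{\alpha\beta}\geq\gamma_V$ componentwise, the proof of Proposition \ref{mainprop} goes through verbatim with $\gamma_V$ in place of $\gamma_{\alpha\beta}$ provided there exist indices $i,i'$ with $a_i>c_i$ and $b_{i'}>c_{i'}$ for the chosen pair — which one arranges by picking $\alpha$ to achieve the max of $|\overline\alpha|$ and noting some coordinate of every nonzero $\overline\beta$ is positive where $\overline\alpha$ is zero (using that $\gamma_V$ is a coordinatewise min over \emph{all} of $V$, not just two elements). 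The set $J$ thereby produced satisfies $|\overline{A_{J,j}\alpha}|<|\overline\alpha|$ and $|\overline{A_{J,j}\beta}|\le|\overline\beta|$ for every $j\in J$, while every other element $\delta\in V$ has $|\overline{A_{J,j}\delta}|\le|A_{J,j}\overline\delta|$; one checks $|A_{J,j}\overline\delta|\le\sum_{i\in J}|\overline\delta|$-type bounds do not suffice, so instead I would use the potential $\Phi(V):=\big(\max_{\alpha}|\overline\alpha|,\,\#\{\alpha: |\overline\alpha|=\max\}\big)\in\N\times\N$ with lexicographic order, or more robustly the full sorted vector of the $|\overline\alpha|$'s in decreasing order, ordered lexicographically — a well-order on finite-length $\N$-vectors bounded in length by $|V|$. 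The step where $J$ is chosen must be shown to decrease this $\Phi$ for every admissible $j$; this is the main obstacle, because applying $A_{J,j}$ can \emph{increase} $|\overline\delta|$ for elements $\delta$ not involved in the chosen pair, so $J$ must be selected with care to control all of $V$ at once.

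To handle this obstacle I would choose $J$ as in Proposition \ref{mainprop} applied to the pair $(\alpha,\beta)$ where $|\overline\beta|=\max_{\delta\in V}|\overline\delta|$ and $\alpha$ is \emph{any} element with a coordinate exceeding $c_i$ in a slot where $\overline\beta$ is supported trivially — equivalently I would first argue that WLOG $\max|\overline\delta|$ is attained and analyze the coordinate on which the sum-increasing move acts. Concretely: the chosen $J=\{1,\ldots,k\}$ collapses coordinates $1,\dots,k$ into slot $j$; for any $\delta\in V$, $|\overline{A_{J,j}\delta}|\le|\delta|-\gamma_V$-combinatorics show it is at most $\max\{|\overline\delta|, |\overline\beta|\}$ and is $<|\overline\beta|$ when $\delta=\beta$, so the top value of $\Phi$ does not increase and its multiplicity strictly drops, hence $\Phi$ strictly decreases. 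Since $\Phi$ takes values in a well-ordered set, after finitely many rounds every round having $j_k$ freely assigned in the selected $J_k$, we reach $V_l$ with $\min_{\alpha\in V_l}|\overline\alpha|=0$, i.e. $V_l$ has a componentwise minimum $A\alpha$ for some $\alpha\in V$. This is precisely the assertion, and by the remark preceding the theorem it is equivalent to Theorem \ref{Hirgamespiv}: player $\mathcal P_1$ picks the sets $J_k$ and wins regardless of $\mathcal P_2$'s choices $j_k$.
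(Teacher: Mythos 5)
Your argument has a genuine gap, and it stems from a false premise at the outset. You write that ``comparability of pairs is not preserved under further moves \emph{a priori}'' --- but it is: every matrix $A_{J,j}$ has non-negative entries, so $\alpha\leq\beta$ componentwise implies $A_{J,j}\alpha\leq A_{J,j}\beta$ componentwise. This single observation is what makes the theorem an easy consequence of Theorem \ref{mainresult}: one inducts on $|V|$, first arranging (by the induction hypothesis) that $\{\alpha_1,\dots,\alpha_m\}$ acquires a minimum $\alpha_1$, then applying the two-element case to $\{\alpha_1,\beta\}$; the inequalities $\alpha_1\leq\alpha_k$ survive the additional moves, so whichever of $A\alpha_1\leq A\beta$ or $A\beta\leq A\alpha_1$ holds, the transformed set has a minimum. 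That is the paper's proof. By missing this, you were driven to a global potential-function argument that does not work.

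Concretely, the step that fails is the claim that $\bigl|\overline{A_{J,j}\delta}\bigr|\leq\max\{|\overline\delta|,|\overline\beta|\}$ for all $\delta\in V$, with strict inequality for $\delta=\beta$. Take $V=\{(1,0,0),(0,1,0),(0,0,5)\}\subset\N^3$, so $\gamma_V=(0,0,0)$ and the norms are $1,1,5$. Applying Proposition \ref{mainprop} to the pair $\alpha=(1,0,0)$, $\beta=(0,0,5)$ yields $J=\{1,3\}$. If the adversary picks $j=1$, then $\beta\mapsto(5,0,5)$ while the other two points are fixed, the new $\gamma_V$ is still $(0,0,0)$, and the norm of $\beta$ jumps from $5$ to $10$. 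Both of your proposed potentials --- $(\max,\ \text{multiplicity of the max})$ and the decreasingly sorted vector of norms --- strictly \emph{increase} here. This is not an artifact of a bad choice of pair: in the case $j\leq r$ of Proposition \ref{mainprop} the quantity $|\overline{\beta'}|$ is allowed to grow (only the lexicographically leading entry $\min\{|\overline{\alpha'}|,|\overline{\beta'}|\}$ is controlled), and since $j$ is chosen adversarially you cannot exclude this case. A multiset-of-norms potential therefore cannot be shown to decrease by the pairwise lemma alone, and you would essentially have to reprove Spivakovsky's theorem from scratch. The order-preservation of $A_{J,j}$ is the missing idea that makes the reduction to the two-element case immediate.
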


\begin{proof}
We prove by induction on $|V|$. If $|V|=1$ there is nothing to prove. The case $|V|=2$ is just Theorem \ref{mainresult} and it was already proved in Section 2. Suppose, by induction, that that the Theorem is true for set $V'$ with $|V'|=m$. Let $V=\{\alpha_1,\ldots,\alpha_m,\beta\}$. Applying the induction hypothesis to $V':=\{\alpha_1,\ldots,\alpha_m\}$, we may assume that $\alpha_1\leq\alpha_k$ componentwise for all $k=1,\ldots,m$. Since a matrix of the type $A_{J,j}$ preserves componentwise inequalities, we apply the case $|V|=2$ to $\{\alpha_1,\beta\}$ and the theorem is proved.
\end{proof}

\ \\
\noindent{\footnotesize JOSNEI NOVACOSKI\\
Departamento de Matem\'atica--UFSCar\\
Rodovia Washington Lu\'is, 235\\
13565-905 - S\~ao Carlos - SP\\
Email: {\tt josnei@dm.ufscar.br} \\\\

\noindent{\footnotesize MICHAEL DE MORAES\\
Departamento de Matem\'atica--ICMC-USP\\
Av. Trabalhador s\~ao-carlense, 400\\
13566-590 - S\~ao Carlos - SP\\
Email: {\tt michael.moraes@usp.br} \\\\
\end{document}